\numberwithin{equation}{section}
\newtheorem{prop}{Proposition}[section]
\newtheorem{theo}[prop]{Theorem}
\newtheorem{lemm}[prop]{Lemma}
\newtheorem{defi}[prop]{Definition}
\def\and{\quad{\rm and}\quad}
\def\<{\langle}
\def\>{\rangle}
\begin{document}
\title[Curvature estimates for $n-1$ Hessian equation]{A simple proof of
curvature estimates for the $n-1$ Hessian equation}
\author[Siyuan Lu and Yi-Lin Tsai]{Siyuan Lu and Yi-Lin Tsai}
\address{Department of Mathematics and Statistics, McMaster University, 1280
Main Street West, Hamilton, ON, L8S 4K1, Canada.}
\email{siyuan.lu@mcmaster.ca}
\email{tsaiy11@mcmaster.ca}
\thanks{2020 Mathematics Subject Classification. 58J05, 35J15, 35J60}
\thanks{Research of the first author was supported in part by NSERC
Discovery Grant.}

\begin{abstract}
In [Amer. J. Math. 141 (2019), no. 5, 1281-1315], Ren and Wang proved the curvature estimates for the $n-1$ curvature equation. The purpose of this note is to give a simple proof of their theorem.
\end{abstract}

\maketitle

\section{Introduction}

In \cite{RW1}, Ren and Wang established a global curvature estimate for a
closed $k$-convex hypersurface $M$ in $\mathbb{R}^{n+1}$ satisfying the following equation for $k=n-1$,
\begin{equation}
\sigma _{k}(\kappa (X))=f(X,\nu (X)),\quad\forall X\in M,
\label{Equation-Sigma_k}
\end{equation}%
where $\sigma _{k}$ is the $k$-th elementary symmetric function, $\nu $ and $%
\kappa =(\kappa _{1},\cdots ,\kappa _{n})$ are the unit outer normal and
principal curvatures of the hypersurface $M$. In particular, $\sigma_k(\kappa)$ corresponds to the mean curvature,
scalar curvature and Gauss curvature when $k=1,2$ and $n$.

Equation (\ref{Equation-Sigma_k}) arises naturally from the study of geometric problems. For example, the Minkowski problem \cite{CY,Nirenberg,
Pog53,Pogb}, the prescribed Weingarten curvature problem \cite{A3,GG}, the
prescribed curvature measure problem \cite{A2,GLL,GLM,Pog53}, and the
prescribed curvature problem \cite{BK,CNS86,TW}.

Equation (\ref{Equation-Sigma_k}) is a fully nonlinear equation. In the study of fully nonlinear equations, curvature estimates play an essential role. We now briefly mention the history of curvature estimates for Equation (\ref{Equation-Sigma_k}). When $k=1$, it is quasi-linear, curvature estimate follows from the standard theory of quasi-linear equations. When $k=n$, it is of Monge-Amp\`{e}re type, curvature estimate was established by Caffarelli, Nirenberg and Spruck \cite{CNS84}. When $f$ is independent of $\nu $, curvature estimate was obtained by Caffarelli, Nirenberg and Spruck \cite
{CNS85} for a general class of fully nonlinear PDEs including $\sigma _{k}$
and $\frac{\sigma _{k}}{\sigma _{l}}$. If $f$ only depends on $\nu $, Guan and Guan \cite{GG} proved the curvature estimate. With extra conditions on the dependence of $f$ on $\nu $, Ivochkina \cite{I1,I2} established the $C^{2}$ estimate for Dirichlet problem of equation (\ref{Equation-Sigma_k}). For prescribed curvature measure problem $f(X,\nu )=\left\langle X,\nu\right\rangle \varphi (X)$, curvature estimate was obtained by Guan, Lin and Ma \cite{GLM} and Guan, Li and Li \cite{GLL}. For general $f(X,\nu )$, Guan, Ren and Wang \cite{GRW} solved the convex case for general $k$ (see a simpler proof by Chu \cite{Chu}), and they also established a curvature estimate for admissible solutions for $k=2$. Later, Spruck and Xiao \cite{SX} found an elegant proof for $k=2$, which works in the space form as well. For $k=n-1$ and $n-2$, curvature estimate was proved by Ren and Wang \cite{RW1,RW2}. We would like to remark that the general case $2<k<n-2$ is still open now.

In attempt to solve the general case, we would like to have a better understanding of the proof by Ren and Wang \cite{RW1} for $k=n-1$. Since their proof is quite sophisticated, it is desirable to have a simple proof of their theorem. On the other hand, a simpler and alternative proof may also guide us towards the resolution of the general case.

Before we state their theorem, let us recall the definition of Garding's $\Gamma _{k}$ cone
\begin{equation*}
\Gamma _{k}=\{\lambda \in \mathbb{R}^{n}:\sigma _{j}(\lambda )>0,1\leq j\leq k\}.
\end{equation*}

\begin{theo}\label{Theorem-General} [Ren and Wang \cite{RW1}]
Let $n\geq 3$, $k=n-1$ and let $M$ be a closed, strictly star-shaped hypersurface satisfying curvature equation (\ref{Equation-Sigma_k}) in $\mathbb{R}^{n+1}$ with $\kappa \in \Gamma _{n-1}$. Let $f\in C^{1,1}(\Gamma )$ be a positive function, where $\Gamma $ is an open neighborhood of the unit normal bundle of $M$ in $\mathbb{R}^{n+1}\times \mathbb{S}^{n}$. Then we have 
\begin{equation*}
\max_{X\in M;1\leq i\leq n}|\kappa _{i}(X)|\leq C,
\end{equation*}
where $C$ is a constant depending only on $n,\Vert X\Vert _{C^{0,1}},\inf f$ and $\Vert f\Vert _{C^{1,1}}$.
\end{theo}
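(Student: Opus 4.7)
I would prove the bound by applying the maximum principle to an auxiliary function of the form
$$
W(X) = \log\lambda_1(X) + \tfrac{A}{2}|X|^2 - B\log u(X),
$$
where $\lambda_1(X)$ is the largest principal curvature, $u(X)=\langle X,\nu(X)\rangle$ is the support function (strictly positive and uniformly bounded below by the strict star-shapedness hypothesis), and $A,B>0$ are large constants to be chosen in terms of the data in the statement. Let $X_0\in M$ be a maximum point of $W$; we may assume $\lambda_1(X_0)\gg 1$, else there is nothing to prove, and by a standard perturbation we may assume $\lambda_1(X_0)>\lambda_2(X_0)$, so that $\lambda_1$ is smooth near $X_0$. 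Choose an orthonormal frame at $X_0$ diagonalizing $h_{ij}$ with $\lambda_1\geq\lambda_2\geq\cdots\geq\lambda_n$, and write $F^{ii}=\partial\sigma_{n-1}/\partial\lambda_i=\sigma_{n-2}(\lambda|i)$.

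Differentiating $\sigma_{n-1}(\kappa)=f$ once, the critical condition $W_k(X_0)=0$ yields a bound on $h_{11,k}/\lambda_1$ in terms of $A$ and $B$. Differentiating the equation a second time, commuting covariant derivatives via the Ricci identity on $M$, and inserting the Hessian formula
$$
(\log\lambda_1)_{ii}=\frac{h_{11,ii}}{\lambda_1}-\frac{h_{11,i}^2}{\lambda_1^2}+\frac{2}{\lambda_1}\sum_{p>1}\frac{h_{1p,i}^2}{\lambda_1-\lambda_p},
$$
the inequality $\sum_iF^{ii}W_{ii}(X_0)\leq 0$ reduces to an algebraic inequality among four groups of terms: (a) the indefinite concavity term $-F^{pq,rs}h_{pq,1}h_{rs,1}$; (b) the positive auxiliary terms $2\sum_{i,p>1}F^{ii}h_{1p,i}^2/[\lambda_1(\lambda_1-\lambda_p)]$; (c) the coercive contribution $(A-B/u^2)\sum_iF^{ii}\lambda_i^2$; and (d) error terms of order $O(\lambda_1)$ produced by $f_X$, $f_\nu$ and the Gauss/Codazzi corrections.

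The heart of a simple proof is the $\sigma_{n-1}$-specific algebraic identity $F^{ii}-F^{jj}=(\lambda_j-\lambda_i)\sigma_{n-3}(\lambda|ij)$, which gives the decomposition
$$
-F^{pq,rs}h_{pq,1}h_{rs,1}=-\sum_{p\neq r}\sigma_{n-3}(\lambda|pr)h_{pp,1}h_{rr,1}+2\sum_{p\neq r}\sigma_{n-3}(\lambda|pr)h_{pr,1}^2.
$$
By Codazzi ($h_{1p,i}=h_{pi,1}$), the off-diagonal piece pairs with the auxiliary terms in (b); using the structural bound $F^{ii}\geq\lambda_1\,\sigma_{n-3}(\lambda|1i)$ for $i\geq 2$ (coming from $F^{ii}=\lambda_1\sigma_{n-3}(\lambda|1i)+\sigma_{n-2}(\lambda|1i)$), the combined contribution has a sign favorable for absorbing the remaining errors. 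The leftover diagonal piece $-\sum_{p\neq r}\sigma_{n-3}(\lambda|pr)h_{pp,1}h_{rr,1}$ is then controlled by substituting the critical equation for $h_{pp,1}$ and invoking the Newton--MacLaurin inequality $\sigma_{n-3}\sigma_{n-1}\leq\sigma_{n-2}^2$, which is valid on $\Gamma_{n-1}$.

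The principal obstacle I anticipate is ensuring the cancellation above is robust enough to dominate the $O(\lambda_1)$ error in (d), which requires a coercive lower bound of the form $\sum_iF^{ii}\lambda_i^2\geq c\lambda_1$. Such a bound follows from the identity $\sum_iF^{ii}\lambda_i^2=\sigma_1f-n\sigma_n$ combined with the $\Gamma_{n-1}$ cone inequalities in the regime $\lambda_1\gg 1$. Once $B$ is chosen to dispose of the $f$-derivative contributions and then $A\gg B$ is chosen to dominate the remaining errors (both depending only on $n$, $\|X\|_{C^{0,1}}$, $\inf f$ and $\|f\|_{C^{1,1}}$), the inequality $\sum_iF^{ii}W_{ii}(X_0)\leq 0$ forces $\lambda_1(X_0)\leq C$, and the definition of $W$ then gives $\max_M\lambda_1\leq C'$; lower bounds on the remaining principal curvatures follow from $\sigma_{n-1}(\kappa)=f>0$ combined with $\kappa\in\Gamma_{n-1}$.
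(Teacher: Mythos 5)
Your setup (the test function $\log\kappa_1 - N\log u + \tfrac{\alpha}{2}|X|^2$, the second-derivative test contracted with $F^{ii}$, the decomposition of $-F^{pq,rs}h_{pq1}h_{rs1}$ into diagonal and off-diagonal parts, the pairing of the off-diagonal part with the good terms $2\sum F^{ii}h_{1pi}^2/[\kappa_1(\kappa_1-\kappa_p)]$, and the coercive bound $F^{11}\kappa_1^2\geq c\kappa_1$) matches the paper's computation up to its inequality (\ref{ineq}). But the step you describe as routine is precisely where the whole difficulty of the theorem lives, and your proposed treatment of it fails. The diagonal concavity term $-\sum_{p\neq r}\sigma_{n-3}(\lambda|pr)\,h_{pp1}h_{rr1}$ cannot be ``controlled by substituting the critical equation for $h_{pp,1}$'': the critical equation at $X_0$ only determines $h_{11i}=\nabla_i h_{11}$, whereas the diagonal term involves $h_{pp1}=\nabla_1 h_{pp}$ for $p\geq 2$, which are independent quantities not controlled by any first-order information except through the single linear combination $\sum_i F^{ii}h_{ii1}=\nabla_1 f$ coming from differentiating the PDE once. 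The quadratic form $-\sum_{p\neq r}\sigma_{n-3}(\lambda|pr)\xi_p\xi_r$ is genuinely indefinite on $\Gamma_{n-1}$, and Newton--MacLaurin alone does not tame it; if it did, the analogous argument would settle all $2<k<n-2$, which the paper emphasizes is still open.

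What the paper actually does at this juncture is the content of its Lemma \ref{Lemma} and the two-case structure of Section 4, both absent from your plan. If $\kappa_n\geq -K_0$ the surface is semi-convex at $X_0$ and the estimate is quoted from Lu's semi-convex result (Theorem \ref{sc thm}); the real work is the case $\kappa_n<-K_0$, where one proves the concavity inequality
\begin{equation*}
-\sum_{p\neq q}F^{pp,qq}\xi_p\xi_q+\frac{\bigl(\sum_i F^{ii}\xi_i\bigr)^2}{F}+2\sum_{i>m}\frac{F^{ii}\xi_i^2}{\lambda_1-\lambda_i}-\frac{F^{11}\xi_1^2}{\lambda_1}\geq 0,
\end{equation*}
using the $\sigma_{n-1}$-specific identities $F^{ii}=F/\lambda_i+\Omega/\lambda_i^2$ and $F^{ii,jj}=F/(\lambda_i\lambda_j)+\Omega(\lambda_i+\lambda_j)/(\lambda_i^2\lambda_j^2)$ with $\Omega=-\sigma_n\geq \lambda_1K_0^{n-1}$, reducing positivity to a determinant computation for a rank-one perturbation of a diagonal matrix with exactly one negative entry. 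The term $(\sum_iF^{ii}\xi_i)^2/F$ is then harmless because $\sum_iF^{ii}h_{ii1}$ is bounded by $C\kappa_1$ via the differentiated equation. Without this (or Ren--Wang's original concavity lemma) your argument has no way to absorb the diagonal cross terms, so the proof as outlined does not close. A secondary, more technical issue: the perturbation to force $\lambda_1>\lambda_2$ is not innocuous; the paper instead invokes the Brendle--Choi--Daskalopoulos viscosity inequality for $\kappa_1$ at a point of multiplicity $m$, which is also what makes the restriction $\xi_i=0$ for $1<i\leq m$ in the concavity lemma usable.
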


The main idea of our new proof is to separate the arguments into two cases. If $M$ is semi-convex (see Definition \ref{semi-convex}), the main theorem holds by Lu's result \cite[Theorem 1.1]{LuSC}. The major difficulty is the case that $M$ is not semi-convex. By exploiting the structure of $\sigma_{n-1}$ and the assumption that $M$ is not semi-convex, we are able to establish a concavity inequality (see Lemma \ref{Lemma}) for admissible solutions. Our proof is elementary in nature and much simpler than Ren-Wang's \cite{RW1} concavity lemma. We believe our new idea can be used in other problems as well.

The organization of the note is as follows. In Section 2, we collect some formulas and lemmas for Hessian operator and the geometry of hypersurfaces. In Section 3, we establish the key concavity inequality. We will prove Theorem \ref{Theorem-General} in Section 4.

\section{Preliminaries}

In this section, we will collect some basic formulas and lemmas.

Let $\lambda=(\lambda_1,\cdots,\lambda_n)\in \mathbb{R}^n$, we will denote 
\begin{align*}
(\lambda|i)=(\lambda_1,\cdots,\lambda_{i-1},\lambda_{i+1},\cdots,\lambda_n)\in \mathbb{R}^{n-1},
\end{align*}
i.e. $(\lambda|i)$ is the vector obtained by deleting the $i$-th component
of the vector $\lambda$. Similarly, $(\lambda|ij)$ is the vector obtained by
deleting the $i$-th and $j$-th components of the vector $\lambda$.

We now collect some basic properties of Hessian operator, see for instance \cite[section 2]{RW2} and \cite{LT}.

\begin{lemm}
\label{Sigma_k-Lemma-0} For any $\lambda=(\lambda_1,\cdots,\lambda_n)\in \mathbb{R}^n$, we have 
\begin{align*}
\sigma_k(\lambda)=\lambda_i\sigma_{k-1}(\lambda|i)+\sigma_k(\lambda|i),\quad
\sum_i\sigma_{k}(\lambda|i)=(n-k)\sigma_{k}(\lambda),\quad
\sum_i\lambda_i\sigma_{k-1}(\lambda|i)=k\sigma_k(\lambda).
\end{align*}
\end{lemm}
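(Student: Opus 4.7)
The plan is to deduce all three identities directly from the definition
\[
\sigma_k(\lambda) = \sum_{1 \le j_1 < \cdots < j_k \le n} \lambda_{j_1}\cdots \lambda_{j_k},
\]
by a simple combinatorial partition/double-counting argument. The lemma is purely formal, so I expect no real obstacle; the whole proof is bookkeeping of which monomials of $\sigma_k(\lambda)$ appear on each side.

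First, for the recursion $\sigma_k(\lambda) = \lambda_i\sigma_{k-1}(\lambda|i) + \sigma_k(\lambda|i)$, I would fix an index $i$ and split the monomials $\lambda_{j_1}\cdots \lambda_{j_k}$ that appear in $\sigma_k(\lambda)$ according to whether the factor $\lambda_i$ is present. Those that contain $\lambda_i$ are, after factoring out $\lambda_i$, exactly the monomials of $\sigma_{k-1}$ in the remaining $n-1$ variables $(\lambda|i)$, so they contribute $\lambda_i \sigma_{k-1}(\lambda|i)$. Those that do not contain $\lambda_i$ are exactly the monomials of $\sigma_k(\lambda|i)$, giving the second term.

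Next, for $\sum_i \sigma_k(\lambda|i) = (n-k)\sigma_k(\lambda)$, I would count the number of times a fixed monomial $\lambda_{j_1}\cdots \lambda_{j_k}$ of $\sigma_k(\lambda)$ appears on the left side. Such a monomial contributes to $\sigma_k(\lambda|i)$ exactly when $i \notin \{j_1,\ldots,j_k\}$, and there are $n-k$ such indices $i$, giving the claim. For the third identity $\sum_i \lambda_i \sigma_{k-1}(\lambda|i) = k\sigma_k(\lambda)$ I have two natural options: either dual-count (each monomial of $\sigma_k(\lambda)$ arises in $\lambda_i\sigma_{k-1}(\lambda|i)$ precisely for the $k$ choices $i \in \{j_1,\ldots,j_k\}$), or, more slickly, sum the first identity over $i$ and use the second to obtain
\[
\sum_i \lambda_i\sigma_{k-1}(\lambda|i) = n\sigma_k(\lambda) - \sum_i \sigma_k(\lambda|i) = n\sigma_k(\lambda) - (n-k)\sigma_k(\lambda) = k\sigma_k(\lambda).
\]

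The hardest aspect here is really only notational: keeping track of the multi-index conventions for $(\lambda|i)$ and verifying the counts carefully. I expect the proof in the reference to either reduce to this combinatorial argument or to be omitted as standard, as these are the classical Newton-type identities for elementary symmetric polynomials.
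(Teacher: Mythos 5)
Your argument is correct: the split of the monomials of $\sigma_k(\lambda)$ according to whether $\lambda_i$ appears gives the first identity, the $(n-k)$- and $k$-fold counting gives the other two, and the ``slick'' derivation of the third from the first two is also valid. The paper itself offers no proof---it cites these as standard properties of the elementary symmetric functions---and your combinatorial bookkeeping is exactly the standard verification one would supply.
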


\begin{lemm}
\label{Sigma_k-lemma} Let $\lambda=(\lambda_1,\cdots,\lambda_n)\in \Gamma_k$ with $\lambda_1\geq\cdots\geq\lambda_n$.

\begin{enumerate}
\item If $\lambda _{i}\leq 0$, then we have 
\begin{equation*}
-\lambda _{i}\leq \frac{(n-k)}{k}\lambda _{1}.
\end{equation*}

\item 
\begin{equation*}
\lambda _{1}\sigma _{k-1}(\lambda |1)\geq C(n,k)\sigma _{k}(\lambda ),
\end{equation*}
where $C(n,k)>0$ is a constant depending only on $n$ and $k$.

\item $\sigma _{j}(\lambda |i_{1}i_{2}...i_{s})>0$ for $\left\{i_{1},...,i_{s}\right\} \subset \left\{ 1,...,n\right\} $ provided that $j+s\leq k.$
\end{enumerate}
\end{lemm}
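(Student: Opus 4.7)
The plan is to prove the three parts in the order (3), (1), (2), since the later parts use the earlier ones.

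Part (3) is the standard hereditary property of Garding's cone, which I would prove by induction on $s$. The inductive step reduces to the claim: if $\mu \in \Gamma_m^M$ then $(\mu|i) \in \Gamma_{m-1}^{M-1}$ for every $i$. Via the inclusion $\Gamma_m \subset \Gamma_j$ for $j \le m$, this in turn reduces to Garding's positivity of the first derivatives $\sigma_{m-1}(\mu|i) > 0$ on $\Gamma_m$---a classical consequence of the hyperbolic-polynomial machinery, standard and recorded in \cite{RW2,LT}.

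For Part (1), the ordering lets us take $i = n$ with $\lambda_n < 0$ WLOG (if $\lambda_n = 0$ the claim is trivial). Consider the affine function
\[
g(t) := \sigma_k(\lambda_1,\ldots,\lambda_{n-1},t) = t\,\sigma_{k-1}(\lambda|n) + \sigma_k(\lambda|n).
\]
Part (3) gives $\sigma_{k-1}(\lambda|n) > 0$, so $g$ is strictly increasing, and $g(\lambda_n) = \sigma_k(\lambda) > 0$. It therefore suffices to verify $g\bl -\tfrac{n-k}{k}\lambda_1\br \le 0$, i.e.\ $\sigma_k(\lambda|n) \le \tfrac{n-k}{k}\lambda_1\,\sigma_{k-1}(\lambda|n)$, since monotonicity of $g$ then forces $\lambda_n > -\tfrac{n-k}{k}\lambda_1$. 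Set $\mu := (\lambda_1,\ldots,\lambda_{n-1})$. Part (3) yields $\sigma_j(\mu) > 0$ for $j \le k-1$, while $\sigma_k(\mu) = \sigma_k(\lambda) - \lambda_n\sigma_{k-1}(\lambda|n) > 0$, so $\mu \in \Gamma_k^{n-1}$. The classical Newton--Maclaurin inequality in $\Gamma_k^{n-1}$ gives $\sigma_1(\mu)\sigma_{k-1}(\mu) \ge \tfrac{(n-1)k}{n-k}\sigma_k(\mu)$, which combined with $\lambda_1 \ge \sigma_1(\mu)/(n-1)$ produces the required bound.

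Part (2) follows from the decomposition $\sigma_k(\lambda) = \lambda_1\sigma_{k-1}(\lambda|1) + \sigma_k(\lambda|1)$. If $\sigma_k(\lambda|1) \le 0$, the claim is immediate with $C = 1$. Otherwise $\sigma_k(\lambda|1) > 0$, which together with Part (3) places $(\lambda|1) \in \Gamma_k^{n-1}$; the same Newton--Maclaurin argument applied to $(\lambda|1)$ (whose maximum entry $\lambda_2 \le \lambda_1$) yields $\lambda_1\,\sigma_{k-1}(\lambda|1) \ge \tfrac{k}{n-k}\sigma_k(\lambda|1)$, and substituting into the decomposition gives the claim with $C(n,k) = k/n$. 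The main obstacle is Part (3): while standard, a self-contained proof needs Garding's hyperbolic-polynomial machinery; once granted, Parts (1)--(2) reduce to elementary algebra plus Newton--Maclaurin.
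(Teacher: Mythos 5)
Your proof is correct. The paper itself gives no proof of this lemma --- it is stated as a collection of standard facts with a pointer to \cite[Section 2]{RW2} and \cite{LT} --- and your argument (the hereditary property of the G\r{a}rding cone for part (3), followed by the decomposition $\sigma_k(\lambda)=\lambda_i\sigma_{k-1}(\lambda|i)+\sigma_k(\lambda|i)$ combined with the generalized Newton--Maclaurin inequality $\sigma_1(\mu)\sigma_{k-1}(\mu)\geq \frac{Nk}{N-k+1}\sigma_k(\mu)$ in dimension $N=n-1$ for parts (1) and (2)) is exactly the standard route taken in those references; your constants $\frac{n-k}{k}$ and $C(n,k)=k/n$ check out.
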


Let $\lambda(A)$ be the eigenvalue vector of a symmetric matrix $A=(a_{ij})$%
. Then we can define a function $F$ on the set of symmetric matrices by 
\begin{align*}
F(A)=f(\lambda(A)).
\end{align*}
Denote 
\begin{align*}
F^{pq}=\frac{\partial F}{\partial a_{pq}},\quad F^{pq,rs}=\frac{\partial^2F}{%
\partial a_{pq}\partial a_{rs}}.
\end{align*}

Suppose $A$ is diagonalized at $x_{0}$, then at $x_{0}$, we have 
\begin{equation*}
\sigma _{k}^{pq}(A)=\frac{\partial \sigma _{k}}{\partial \lambda _{p}}%
(\lambda )\delta _{pq}=\sigma _{k-1}(\lambda |p)\delta _{pq},
\end{equation*}%
\begin{equation*}
\sigma _{k}^{pq,rs}(A)=%
\begin{cases}
\frac{\partial ^{2}\sigma _{k}}{\partial \lambda _{p}\partial \lambda _{r}}%
(\lambda )=\sigma _{k-2}(\lambda |pr),\quad  & p=q,r=s,p\neq r, \\ 
-\frac{\partial ^{2}\sigma _{k}}{\partial \lambda _{p}\partial \lambda _{q}}%
(\lambda )=-\sigma _{k-2}(\lambda |pq), & p=s,q=r,p\neq q, \\ 
0, & \mathit{otherwise}.%
\end{cases}%
\end{equation*}

We now state a well-known formula, see for instance in \cite{LuCAG}.
\begin{lemm}
\label{support function} Let $u=\left\langle X,\nu\right\rangle$ be the support function, then we have 
\begin{align*}
u_i =&\ \sum_{k,l}g^{kl}h_{ik}\left\langle X,e_l\right\rangle, \\
u_{ij}=&\ \sum_{k,l}g^{kl}h_{ijk}\left\langle X,e_l\right\rangle+h_{ij}-\sum_{k,l}g^{kl}h_{ik}h_{jl}u,
\end{align*}
where $(g^{ij})$ is the inverse matrix of $(g_{ij})$.
\end{lemm}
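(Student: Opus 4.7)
The plan is to compute the first and second covariant derivatives of the support function $u = \langle X, \nu \rangle$ directly from first principles, relying on only three standard ingredients from the geometry of hypersurfaces in $\mathbb{R}^{n+1}$: the Gauss formula $\partial_j e_l = \Gamma_{jl}^k e_k - h_{jl}\nu$, the Weingarten relation $\partial_i \nu = \sum_{k,l} g^{kl} h_{il} e_k$, and the Codazzi identity $\nabla_j h_{ik} = \nabla_k h_{ij}$.

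For $u_i$, I would simply apply the product rule and observe that $\partial_i X = e_i$ is tangent to $M$, hence orthogonal to $\nu$, so the first term drops out and $u_i = \langle X, \partial_i\nu\rangle$. Plugging in the Weingarten relation and using the symmetry of $g^{ij}$ and $h_{ij}$ to rename the dummy indices yields the first identity of the lemma immediately.

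For $u_{ij}$, I would differentiate $u_i = \sum_{k,l} g^{kl}h_{ik}\langle X, e_l\rangle$ once more and then subtract $\Gamma_{ij}^m \partial_m u$ to promote the ordinary second partial to the covariant Hessian. Three sources of terms arise: $\partial_j$ hitting $\langle X, e_l \rangle$ via $\partial_j X = e_j$ contracts against $h_i^l$ to produce the bare $h_{ij}$ term; $\partial_j$ hitting $\langle X, e_l \rangle$ via $\partial_j e_l$ delivers $-\sum g^{kl}h_{ik}h_{jl}u$ together with a Christoffel piece from the tangential part of Gauss; and $\partial_j$ hitting $g^{kl}h_{ik}$ gives $\partial_j h_i^l$, which I would rewrite as $\nabla_j h_i^l$ plus two further Christoffel symbols.

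The only nontrivial step is the bookkeeping: verifying that these stray Christoffel contributions, together with the correction $-\Gamma_{ij}^m\partial_m u$, cancel pairwise, leaving exactly $\sum g^{kl}(\nabla_j h_{ik})\langle X, e_l\rangle + h_{ij} - \sum g^{kl}h_{ik}h_{jl}u$. A final appeal to Codazzi to replace $\nabla_j h_{ik}$ by $h_{ijk}$ yields the second identity. No deeper obstacle is anticipated, as the whole argument reduces to a careful Gauss–Codazzi–Weingarten computation with consistent sign conventions.
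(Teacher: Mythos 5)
Your computation is correct and is the standard derivation: the paper itself offers no proof of this lemma, simply citing it as a well-known formula from \cite{LuCAG}, and the Gauss--Weingarten--Codazzi argument you outline is precisely what that reference (and any textbook treatment) carries out. The only point to watch is consistency of sign conventions between your Gauss formula $\partial_j e_l=\Gamma_{jl}^k e_k-h_{jl}\nu$ and Weingarten relation $\partial_i\nu=\sum_{k,l}g^{kl}h_{il}e_k$; as written they are mutually consistent (both correspond to $h_{jl}=\langle e_l,\partial_j\nu\rangle$), and with them the Christoffel cancellations you describe do go through, so the proof is complete.
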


For a fixed local orthonormal frame $(e_{1},\cdots ,e_{n})$, the Codazzi equation implies
\begin{equation*}
h_{ijk}=h_{ikj}.
\end{equation*}

The interchanging formula is given by 
\begin{equation}
h_{iijj}=\ h_{jjii}+h_{jj}h_{ii}^{2}-h_{jj}^{2}h_{ii}.  \label{comm}
\end{equation}

\begin{defi}\label{semi-convex}
We say a hypersurface $M$ is semi-convex if there exists a constant $K_0$
such that
\begin{equation*}
\kappa _{i}\left( X\right) \geq -K_0,\quad\forall 1\leq i\leq n,\quad\forall X\in M,
\end{equation*}
where $\kappa_i$'s are the principal curvatures of $M$.
\end{defi}

We will also use the following result by Lu \cite[Theorem 4.1]{LuSC}. 
\begin{theo}\label{sc thm}[Lu \cite{LuSC}] Let $n\geq 3$, $1\leq k\leq n$ and let $M$ be a semi-convex, strictly star-shaped hypersurface satisfying curvature equation (\ref{Equation-Sigma_k}) in $\mathbb{H}^{n+1}$ with $\kappa \in \Gamma _k$. Let $f\in C^{1,1}(\Gamma )$ be a positive function, where $\Gamma $ is an open neighborhood of the unit normal bundle of $M$ in $\mathbb{H}^{n+1}\times \mathbb{S}^{n}$. Then we
have 
\begin{equation*}
\max_{X\in M;1\leq i\leq n}|\kappa _{i}(X)|\leq C\left( 1+\max_{X\in\partial M;1\leq i\leq n}|\kappa _{i}(X)|\right) ,
\end{equation*}%
where $C$ is a constant depending only on $n, k, \Vert X\Vert _{C^{0,1}},\inf f$ and $\Vert f\Vert _{C^{1,1}}$.
\end{theo}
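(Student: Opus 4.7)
The plan is to apply the maximum principle to a log-of-largest-curvature test function, with semi-convexity as the single extra ingredient that tames the indefinite third-order terms. Let $\kappa_1 \geq \cdots \geq \kappa_n$ denote the principal curvatures and $u$ be the (hyperbolic analogue of the) support function; strict star-shapedness yields $0 < c_1 \leq u \leq c_2$. Consider
\begin{equation*}
W = \log h_{11} + \phi(u) + \psi(|X|^2),
\end{equation*}
with $\phi(u) = -\log(2c_2 - u)$ and $\psi$ a suitably chosen multiple of $|X|^2$, so that $\phi', \phi'' > 0$. Using the standard perturbation argument, treat $h_{11}$ as a smooth function near the maximizer of $W$. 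If that maximum is attained on $\partial M$, the estimate in the theorem is immediate; otherwise let $X_0$ be an interior maximizer.

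At $X_0$, in a local frame that diagonalizes the second fundamental form, combine the first-order condition $W_i = 0$ with $F^{ii} W_{ii} \leq 0$, where $F^{ij} = \sigma_k^{ij}(h)$. Differentiating $\sigma_k(\kappa)=f$ twice in the direction $e_1$, applying the interchange formula (\ref{comm}) (with the hyperbolic ambient-curvature correction), and using Lemma \ref{support function} for $u_{ii}$, one obtains a schematic inequality of the form
\begin{equation*}
0 \geq -\frac{1}{h_{11}} F^{pq,rs} h_{pq1} h_{rs1} - \frac{F^{ii}(h_{11i})^2}{h_{11}^2} - u\, \phi'\, F^{ii} h_{ii}^2 + \phi'' F^{ii} u_i^2 + C \sum_i F^{ii}.
\end{equation*}
The term $-u\phi' F^{ii} h_{ii}^2$ is the good one: by Lemma \ref{Sigma_k-lemma}(2), $F^{11} h_{11}^2 \geq C(n,k)\, h_{11} f$, so it supplies a negative contribution at least of order $h_{11}\cdot f$. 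After substituting $h_{11i}/h_{11} = -\phi' u_i - \psi'(|X|^2)_i$ from the first-order condition, the $F^{ii}(h_{11i})^2/h_{11}^2$ piece is reduced to an $O(1)\sum F^{ii}$ contribution.

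The crux, and where I expect the main difficulty, is the concavity term $-F^{pq,rs}h_{pq1}h_{rs1}/h_{11}$. Expanded via the explicit formula for $\sigma_k^{pq,rs}$, it splits into a diagonal piece $-\sum_{p \neq q}\sigma_{k-2}(\kappa|pq) h_{pp1}h_{qq1}/h_{11}$ plus a non-negative off-diagonal piece $2\sum_{p\neq q}\sigma_{k-2}(\kappa|pq)(h_{pq1})^2/h_{11}$; the latter naturally pairs with $2\sum_{i\geq 2}(F^{11}-F^{ii})(h_{1i1})^2/(h_{11}-h_{ii})$ coming from differentiating the simple eigenvalue $h_{11}$. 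Both pieces need the denominators $(h_{11}-h_{ii})^{-1}$ to be comparable to $h_{11}^{-1}$, which in general is exactly the obstruction. Here the semi-convexity $\kappa_i \geq -K_0$ is decisive: whenever $h_{11} \geq 4 K_0$, one has $h_{11} - h_{ii} \geq \tfrac{1}{2} h_{11}$ for every $i \geq 2$, and $|h_{ii}| \leq h_{11}$, so all the bad denominators are under control. The remaining work is the algebraic bookkeeping that absorbs every indefinite term into the good piece $-u\phi' F^{ii}h_{ii}^2$, yielding $h_{11}(X_0) \leq C(1 + \max_{\partial M}\kappa_1)$ and hence the stated estimate.
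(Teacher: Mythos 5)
You should first note that the paper does not prove this statement at all: it is quoted as a black box from Lu \cite[Theorem 4.1]{LuSC} (Section 4 of the present paper only reproduces the first half of that computation, up to (\ref{ineq}), before branching into the non-semi-convex case). So your sketch must stand on its own, and while its skeleton --- a test function built from $\ln\kappa_1$, a function of $u$ and of $|X|^2$, the maximum principle, and the identification of $-F^{pq,rs}h_{pq1}h_{rs1}$ as the crux --- is the correct one, there are two genuine gaps.

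First, a sign problem: you choose $\phi(u)=-\log(2c_2-u)$ with $\phi'>0$ and then declare $-u\phi'F^{ii}h_{ii}^2$ to be ``the good term'' because it is large and \emph{negative}. In the inequality $0\geq F^{ii}W_{ii}$, a large negative contribution to the right-hand side yields no information; the term $\sum_iF^{ii}h_{ii}^2\geq F^{11}h_{11}^2\gtrsim\kappa_1$ must appear with a \emph{positive} sign to force $\kappa_1\leq C$, which requires $\phi'<0$ (this is why the paper and \cite{LuSC} take $-N\ln u$). As written, ``absorbing every indefinite term into the good piece'' produces a vacuous inequality. Second, and more seriously, the claim that semi-convexity gives $h_{11}-h_{ii}\geq\tfrac12 h_{11}$ for all $i\geq2$ once $h_{11}\geq 4K_0$ is false: $\kappa_i\geq-K_0$ is a \emph{lower} bound on $\kappa_i$, hence an \emph{upper} bound $h_{11}-h_{ii}\leq h_{11}+K_0$, and says nothing when $h_{ii}$ is close to $h_{11}$ (at a multiple largest eigenvalue the gap is $0$; that degeneracy is what the viscosity inequality (\ref{BCD}) of Brendle--Choi--Daskalopoulos is needed for). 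Even granting control of all denominators, the indefinite quadratic form $-\sum_{p\neq q}\sigma_{k-2}(\kappa|pq)h_{pp1}h_{qq1}$ is not ``remaining algebraic bookkeeping'': showing that it, together with $-F^{11}h_{111}^2/\kappa_1^2$ and the positive off-diagonal terms, can be made nonnegative (or absorbed) is precisely the hard content of \cite{LuSC} for semi-convex solutions, of Ren--Wang's concavity lemmas, and of Lemma \ref{Lemma} of this paper in the non-semi-convex case. Your sketch stops exactly where the proof has to begin.
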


Note that above theorem works for $\mathbb{R}^{n+1}$ as well.

\section{A concavity inequality}

In this section, we will prove a concavity inequality for $\sigma_{n-1}$ operator when the smallest eigenvalue $\lambda_n$ is very negative. This is the key step towards the curvature estimate in the case that $M$ is not semi-convex.
\begin{lemm}\label{Lemma}
Let $n\geq 3$, let $F=\sigma_{n-1}$ and let $\lambda=(\lambda_1,\cdots,\lambda_n)\in \Gamma_{n-1}$ with 
\begin{align*}
\lambda _{1}=\cdots =\lambda _{m}>\lambda _{m+1}\geq \cdots \geq \lambda_{n}.
\end{align*}
Then there exists $K_{0}\geq 1$ depending only on $n$ and $\max F$ such that if $\lambda _{n}<-K_{0}$, then we have%
\begin{equation*}
-\sum_{p\neq q}F^{pp,qq}\xi _{p}\xi _{q}+\frac{\left(\sum_i F^{ii}\xi _{i}\right) ^{2}}{F}+2\sum_{i>m}\frac{F^{ii}\xi _{i}^{2}%
}{\lambda _{1}-\lambda _{i}}-\frac{F^{11}\xi _{1}^{2}}{\lambda
_1}\geq 0,
\end{equation*}
where $\xi=(\xi_1,\cdots,\xi_n)$ is an arbitrary vector in $\mathbb{R}^n$ satisfying $\xi_i=0$ for $1<i\leq m$.
\end{lemm}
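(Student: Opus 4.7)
The plan is to reduce the inequality to a positive semidefinite (PSD) statement for a quadratic form, using an identity specific to $F = \sigma_{n-1}$. First, I would establish the key algebraic identity
\[ F^{ii}F^{jj} - F\cdot F^{ii,jj} = (\sigma_{n-2}(\lambda|ij))^2, \quad i\neq j, \]
which can be verified directly: writing everything in terms of $P = \prod_k \lambda_k$ and $s_I = \sum_{k\notin I}1/\lambda_k$ (in the generic case where all $\lambda_k \neq 0$), the identity reduces to the elementary relation $s_i s_j - s \cdot s_{ij} = 1/(\lambda_i\lambda_j)$, and the general case follows by continuity.

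Using this identity and the algebraic rearrangement
\[ \frac{(\sum F^{ii}\xi_i)^2}{F} - \sum_{p\neq q}F^{pp,qq}\xi_p\xi_q = \frac{1}{F}\Bigl[\sum_i(F^{ii})^2\xi_i^2 + \sum_{p\neq q}(\sigma_{n-2}(\lambda|pq))^2\xi_p\xi_q\Bigr], \]
the whole expression $\mathcal E$ becomes a quadratic form in $\xi$. A standard property of $\Gamma_{n-1}$ gives $\lambda_1,\ldots,\lambda_{n-1}>0$, so the only possibly negative eigenvalue is $\lambda_n$. Writing $t=-\lambda_n > K_0$, $\mu_i=\lambda_i$ for $i<n$, $P = \prod_{i<n}\mu_i$ and $Q=\sigma_{n-2}(\mu)$, one has $F = P - tQ$, $F^{ii}=(F\mu_i+tP)/\mu_i^2$ for $i<n$, $F^{nn}=Q$, $\sigma_{n-2}(\lambda|ij) = -tP/(\mu_i\mu_j)$ for $i,j<n$, and $\sigma_{n-2}(\lambda|in) = P/\mu_i$ for $i<n$. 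In particular $\sigma_{n-1}(\lambda|1) = -tP/\mu_1 < 0$, so the coefficient of $\xi_1^2$ in $\mathcal E$ is $tPF^{11}/(\mu_1^2 F)$, which is strictly positive---this is the source of the needed positivity.

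The central step is to take the Schur complement of the $\xi_1^2$-entry and analyze the $(n-m)\times(n-m)$ residual quadratic form in $(\xi_i)_{i>m}$. A direct computation shows that the off-diagonal residual entries $\tilde M_{ij}$ for $i,j\in\{m+1,\ldots,n-1\}$ collapse to $t^2P^2\mu_1/[\mu_i^2\mu_j^2(F\mu_1+tP)]$, a positive scalar multiple of a rank-one PSD matrix; the mixed $(i,n)$ entries likewise collapse to a rank-one contribution $v_iv_n$ with a compatible $v_n$. So the block indexed by $\{m+1,\ldots,n-1\}$ together with its coupling to $n$ has a clean rank-one positivity structure.

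The main obstacle will be the $(n,n)$ corner of the residual, where the rank-one structure breaks because $\sigma_{n-2}(\lambda|1n)=P/\mu_1$ carries no factor of $t$ (unlike $\sigma_{n-2}(\lambda|1j) = -tP/(\mu_1\mu_j)$ for $j<n$), and consequently $\tilde M_{nn} - v_n^2$ can a priori have either sign. This is precisely where the hypothesis $\lambda_n < -K_0$ must be invoked: combining $F\leq\max F$ with $t>K_0$ large and exploiting the positive diagonal contribution $2F\cdot F^{nn}/(\lambda_1+t)$, one verifies that the full residual is PSD. Choosing $K_0 = K_0(n,\max F)$ sufficiently large, the calculation closes and yields the claimed concavity inequality.
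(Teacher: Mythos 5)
Your setup is sound and, up to the point where it stops, amounts to a reparametrization of the paper's argument: the identity $F^{ii}F^{jj}-F\,F^{ii,jj}=\bigl(\sigma_{n-2}(\lambda|ij)\bigr)^{2}$ is correct (it is the paper's $-F^{pp,qq}+F^{pp}F^{qq}/F=\Omega^{2}/(F\lambda_p^{2}\lambda_q^{2})$ with $\Omega=-\sigma_n$), your entry formulas check out, the $\xi_1^2$-coefficient is indeed $tPF^{11}/(\mu_1^2F)>0$, and the off-diagonal Schur-complement entries do collapse to $v_pv_q$ with $v_i=tP\mu_i^{-2}\sqrt{\mu_1/(F\mu_1+tP)}$ for $m<i<n$ and $v_n=(P/t)\sqrt{\mu_1/(F\mu_1+tP)}$; one also has $\tilde M_{ii}-v_i^{2}=F\mu_i^{-2}+2tP\mu_i^{-3}+2F^{ii}/(\mu_1-\mu_i)>0$ for $m<i<n$.

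The gap is the final, decisive step. Writing $\tilde M=vv^{T}+\mathrm{diag}(e)$, the corner deficit is not merely ``of either sign'': combining the two subtracted terms gives $M_{1n}^{2}/M_{11}+v_n^{2}=P^{2}/(Ft^{2})$, whence
\[
e_n=\tilde M_{nn}-v_n^{2}=\frac{Q^{2}t^{2}-P^{2}}{Ft^{2}}+\frac{2Q}{\mu_1+t}=\frac{2Q}{\mu_1+t}-\frac{Qt+P}{t^{2}},
\]
and since $\mu_1\geq t$ (every positive entry of $\lambda$ dominates $|\lambda_n|$ in $\Gamma_{n-1}$) we get $2Q/(\mu_1+t)\leq Q/t$ and therefore $e_n\leq -P/t^{2}<0$ for \emph{every} admissible $\lambda$, no matter how large $K_0$ is. So the mechanism you name --- the positive diagonal contribution $2F^{nn}/(\lambda_1+t)$ together with $t$ large --- cannot close the argument corner by corner; what remains is to prove positive semidefiniteness of a matrix of the form (rank one) $+$ (diagonal with exactly one strictly negative entry). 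That requires the global determinant/interlacing argument, i.e.\ computing $\det(\tilde M)=\prod_ie_i\,(1+\sum_iv_i^{2}/e_i)$ in closed form and showing it has the right sign once $K_0$ is large; this is exactly where the paper does its real work (there the identity $\sum_i1/\lambda_i=\sigma_{n-1}/\sigma_n$ collapses the analogous sum to $\frac12+(1-\frac n2)\Omega/(\lambda_1F)$, whose sign is then forced by $\Omega\geq\lambda_1K_0^{\,n-1}$). Your proposal asserts ``one verifies that the full residual is PSD'' without supplying this computation, so the heart of the lemma is missing.
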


\begin{proof}
Denote 
\begin{align*}
\Omega =-\sigma _{n}.
\end{align*}

By Lemma \ref{Sigma_k-lemma}, $\sigma_1(\lambda|12\cdots n-2)=\lambda_{n-1}+\lambda_n>0$. Thus $\lambda_i\geq |\lambda_n|$ for all $1\leq i\leq n-1$. Consequently,
\begin{equation}
\Omega=\lambda_1\cdots\lambda_{n-1}\cdot |\lambda_n| \geq \lambda _{1}\left( K_{0}\right) ^{n-1} . \label{omega}
\end{equation}

By Lemma \ref{Sigma_k-Lemma-0}, we have 
\begin{equation*}
\lambda _{i}F^{ii}=\lambda_i\sigma_{n-2}(\lambda|i)=\sigma_{n-1}-\sigma _{n-1}(\lambda |i)=F-\frac{\sigma _{n}}{\lambda
_{i}}.
\end{equation*}

Together with the definition of $\Omega$, we have
\begin{equation}\label{Con-1}
F^{ii}=\frac{F}{\lambda _{i}}+\frac{\Omega }{\lambda _{i}^{2}}.
\end{equation}

By Lemma \ref{Sigma_k-Lemma-0}, (\ref{Con-1}) and the definition of $\Omega$, for $j\neq i$, we have
\begin{align*}
\lambda _{j}F^{ii,jj}=&\ \lambda_j\sigma_{n-3}(\lambda|ij)=\sigma_{n-2}(\lambda|i)-\sigma _{n-2}(\lambda |ij)\\
=&\ F^{ii}-\frac{\sigma_n}{\lambda_i\lambda_j}=\frac{F}{\lambda _{i}}+\frac{\Omega }{\lambda _{i}^{2}}+\frac{\Omega }{\lambda _{i}\lambda _{j}}.
\end{align*}

It follows that 
\begin{equation}\label{Con-2}
F^{ii,jj}=\frac{F}{\lambda _{i}\lambda _{j}}+\frac{\Omega (\lambda
_{i}+\lambda _{j})}{\lambda _{i}^{2}\lambda _{j}^{2}}.
\end{equation}

By (\ref{Con-1}) and (\ref{Con-2}), we have
\begin{align}\label{Con-3}
 &\ -\sum_{p\neq q}F^{pp,qq}\xi _{p}\xi _{q}+\frac{\left(\sum_i F^{ii}\xi _{i}\right)^{2}}{F}\\\nonumber
 =&\ -\sum_{p\neq q}F^{pp,qq}\xi _{p}\xi _{q}+\sum_{p\neq q} \frac{F^{pp}F^{qq}\xi_p\xi_q}{F}+\sum_i \frac{(F^{ii})^2\xi_i^2}{F}\\\nonumber
 =&\ -\sum_{p\neq q}\left( \frac{F}{\lambda _{p}\lambda _{q}}+\frac{\Omega(\lambda _{p}+\lambda _{q})}{\lambda _{p}^{2}\lambda _{q}^{2}}\right) \xi _{p}\xi _{q}+\sum_{p\neq q}\left( \frac{F}{\lambda _{p}}+\frac{\Omega }{\lambda _{p}^{2}}\right) \left( \frac{F}{\lambda _{q}}+\frac{\Omega }{\lambda _{q}^{2}}\right) \frac{\xi _{p}\xi _{q}}{F} \\\nonumber
&\ +\sum_i \left( \frac{F}{\lambda _i}+\frac{\Omega }{\lambda _i^{2}}\right)^2\frac{\xi_i^2}{F}\\\nonumber
=&\ \sum_{p\neq q}\frac{\Omega^2 }{\lambda _{p}^{2}\lambda _{q}^{2}}\cdot \frac{\xi _{p}\xi _{q}}{F}+\sum_i \left( \frac{F^2}{\lambda _i^2}+2\frac{F\Omega}{\lambda_i^3}+\frac{\Omega^2 }{\lambda _i^4}\right)\frac{\xi_i^2}{F}\\\nonumber
=&\ \sum_{p,q}\frac{\Omega^2 }{F}\cdot\frac{\xi _{p}\xi _{q}}{\lambda_{p}^{2}\lambda _{q}^{2}}+\sum_i F\cdot\frac{\xi_i^2}{\lambda _i^2}+\sum_i 2\Omega\cdot \frac{\xi_i^2}{\lambda_i^3}.
\end{align}

By (\ref{Con-1}), we have
\begin{align}\label{Con-4}
&\ 2\sum_{i>m}\frac{F^{ii}\xi _{i}^{2}}{\lambda _{1}-\lambda _{i}}-\frac{F^{11}\xi _{1}^{2}}{\lambda _{1}}\\\nonumber
=&\ 2\sum_{i>m}\left(\frac{F}{\lambda _{i}}+\frac{\Omega }{\lambda _{i}^{2}}\right)\frac{\xi _{i}^{2}}{\lambda _{1}-\lambda _{i}}-\left(\frac{F}{\lambda _1}+\frac{\Omega }{\lambda _1^{2}}\right)\frac{\xi _{1}^{2}}{\lambda _{1}}\\\nonumber
=&\ F\left(2\sum_{i>m}\frac{\xi _{i}^{2}}{(\lambda _{1}-\lambda _{i})\lambda_i} -\frac{\xi_1^2}{\lambda_1^2}\right)+\Omega\left( 2\sum_{i>m}\frac{\xi _{i}^{2}}{(\lambda _{1}-\lambda _{i})\lambda _{i}^{2}}-\frac{\xi _{1}^{2}}{\lambda _{1}^3}\right).
\end{align}

Combining (\ref{Con-3}) and (\ref{Con-4}), we have
\begin{align}\label{Con-5}
& \ -\sum_{p\neq q}F^{pp,qq}\xi _{p}\xi _{q}+\frac{\left(\sum_i F^{ii}\xi _{i}\right)^{2}}{F}+2\sum_{i>m}\frac{F^{ii}\xi _{i}^{2}}{\lambda _{1}-\lambda _{i}}-\frac{F^{11}\xi _1^2}{\lambda _{1}}\\\nonumber
=&\ F\left( \sum_i\frac{\xi_i^{2}}{\lambda _i^{2}}+2\sum_{i>m}\frac{\xi _{i}^{2}}{(\lambda _{1}-\lambda_{i})\lambda _{i}}-\frac{\xi _{1}^{2}}{\lambda _{1}^2}\right)\\\nonumber
&\ +\Omega \left( \sum_{p,q}\frac{\Omega}{F}\cdot\frac{\xi _{p}\xi _{q}}{\lambda_{p}^{2}\lambda _{q}^{2}}+\sum_i\frac{2\xi _i^{2}}{\lambda _i^3}+2\sum_{i>m}\frac{\xi _{i}^{2}}{(\lambda _{1}-\lambda _{i})\lambda _{i}^{2}}-\frac{\xi _{1}^{2}}{\lambda _{1}^3}\right) \\\nonumber
=&\ F \cdot I+\Omega\cdot II.
\end{align}

Note that $\lambda_i>0$ for all $1\leq i\leq n-1$, thus
\begin{align*}
I=&\ \sum_{i\neq 1}\frac{\xi _i^{2}}{\lambda _i^{2}}+2\sum_{i>m}\frac{\xi _i^{2}}{(\lambda _{1}-\lambda _i)\lambda_i}\\
\geq&\  \left( \frac{1}{\lambda _{n}^{2}}+\frac{2}{(\lambda _{1}-\lambda_{n})\lambda _{n}}\right) \xi _{n}^{2}\\
=&\ \frac{\lambda_1+\lambda_n}{(\lambda_1-\lambda_n)\lambda _{n}^{2}}\cdot \xi _{n}^{2} \geq 0.
\end{align*}
We have used the fact that $\lambda_1+\lambda_n\geq 0$ in the last line by Lemma \ref{Sigma_k-lemma}.

Therefore, to prove the lemma, we only need to show $II\geq 0$. For the sake of convenience, define 
\begin{equation*}
\eta _{i}=\frac{\xi _{i}}{\lambda _{i}^{2}}.
\end{equation*}

By (\ref{Con-5}), we have
\begin{align}\label{matrix}
 II =&\ \sum_{p,q}\frac{\Omega }{F}\eta _{p}\eta_{q}+\sum_i2\lambda _i\eta _i^{2}+2\sum_{i>m}\frac{\lambda_i^{2}\eta _i^{2}}{\lambda _{1}-\lambda_i}-\lambda _{1}\eta _{1}^{2}\\\nonumber
=&\ \sum_{p,q}\frac{\Omega }{F}\eta _{p}\eta _{q}+\lambda _{1}\eta_{1}^{2}+\sum_{1<i\leq m}2\lambda _{i}\eta _{i}^{2}+\sum_{i>m}\frac{2\lambda_{1}\lambda _i}{\lambda _{1}-\lambda _i}\eta _i^{2}  \notag \\\nonumber
=&\ \sum_{p,q}a_{pq}\eta _{p}\eta _{q}.  
\end{align}

By assumption, $\xi_i=0$ for $1<i\leq m$, thus  $\eta_i=0$ for $1<i\leq m$. By deleting the rows and columns where $\eta _i=0$, we obtain an $\left( n-m+1\right) \times \left( n-m+1\right) $ submatrix of $\left( a_{pq}\right) $. This submatrix can be viewed as the sum of a rank $1$ matrix $s^{T}s$ and a diagonal matrix $D=diag(d_{1},d_{2},...,d_{n-m+1})$,
where 
\begin{align*}
s =&\ \sqrt{\frac{\Omega }{F}}\left[ 1,1,...,1\right], \\
d_{1}=&\ \lambda _{1},\quad d_{i-m+1}=\frac{2\lambda _{1}\lambda _i}{\lambda _{1}-\lambda _i},\quad\forall i>m.
\end{align*}

To prove $II\geq 0$, we only need to show $D+s^{T}s$ is positive definite. Note that
\begin{align}\label{Con-6}
\det \left( D+s^{T}s\right) =&\ \det \left( D\right) \det \left(I_{n-m+1}+D^{-1}s^{T}s\right) \\\nonumber
=&\ \det \left( D\right) \left(1+sD^{-1}s^{T}\right) \\\nonumber
=&\ \det \left( D\right) \left( 1+\frac{\Omega }{F}\sum_{k}\frac{1}{d_{k}}\right).
\end{align}

Since $\lambda_i=\lambda_1$ for $1<i\leq m$, we have
\begin{align*}
\sum_{k}\frac{1}{d_{k}}=&\ \frac{1}{\lambda _{1}}+\sum_{i>m}\frac{\lambda_{1}-\lambda _i}{2\lambda _{1}\lambda_i}=\ \frac{1}{\lambda _{1}}+\sum_i\frac{\lambda _{1}-\lambda _i}{2\lambda _{1}\lambda _i} \\
=&\ \frac{1}{\lambda _{1}}+\sum_i\frac{1}{2\lambda_i}-\frac{n}{2\lambda _{1}}=\ \left( 1-\frac{n}{2}\right) \frac{1}{\lambda _{1}}+\frac{\sigma_{n-1}}{2\sigma _{n}} \\
=&\ \left( 1-\frac{n}{2}\right) \frac{1}{\lambda _{1}}-\frac{F}{2\Omega }.
\end{align*}
We have used the fact that $\sum_i\frac{1}{\lambda_i}=\frac{\sigma_{n-1}}{\sigma_n}$ in the second line.

Plugging into (\ref{Con-6}), we have
\begin{align*}
\det \left( D+s^{T}s\right) = \det \left( D\right) \left( 1+\frac{\Omega }{F}\sum_{k}\frac{1}{d_{k}}\right)= \det \left( D\right) \left( \frac{1}{2}+\left( 1-\frac{n}{2}\right) \frac{\Omega }{\lambda _{1}F}\right).
\end{align*}

By (\ref{omega}), for $K_0$ sufficiently large, $\frac{1}{2}+\left( 1-\frac{n}{2}\right) \frac{\Omega }{\lambda _{1}F}<0$. On the other hand, $ \det \left( D\right) <0$ by definition of $D$. Thus $\det \left( D+s^{T}s\right)>0$.

Since $D$ has only one negative eigenvalue and $s^{T}s$ is nonnegative, together with the fact that  $\det \left( D+s^{T}s\right)>0$, we conclude that $D+s^{T}s$ is positive definite. Consequently $II\geq 0$. The proof of the lemma is now complete.
\end{proof}

\section{Curvature estimates}

In this section, we will prove Theorem \ref{Theorem-General}. It is a consequence of the following theorem.

\begin{theo}\label{theoremC2}
Let $n\geq 3$, $k=n-1$ and let $M$ be a strictly star-shaped hypersurface satisfying curvature equation (\ref{Equation-Sigma_k}) in $\mathbb{R}^{n+1}$ with $\kappa \in\Gamma_{n-1}$. Let $f\in C^{1,1}(\Gamma )$ be a positive function, where $\Gamma $ is an open neighborhood of the unit normal bundle of $M$ in $\mathbb{R}^{n+1}\times \mathbb{S}^{n}$. Then we have 
\begin{equation*}
\max_{X\in M;1\leq i\leq n}|\kappa _{i}(X)|\leq C\left( 1+\max_{X\in
\partial M;1\leq i\leq n}|\kappa _{i}(X)|\right) ,
\end{equation*}
where $C$ is a constant depending only on $n,\Vert X\Vert _{C^{0,1}},\inf f$ and $\Vert f\Vert _{C^{1,1}}$.
\end{theo}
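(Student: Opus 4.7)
The plan is to follow the author's dichotomy, combining Theorem \ref{sc thm} with a maximum-principle argument powered by Lemma \ref{Lemma}.

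\textbf{Step 1 (Dichotomy).} Let $K_0 \geq 1$ be the constant from Lemma \ref{Lemma}. If $M$ is semi-convex with constant $K_0$, Theorem \ref{sc thm} applied with $k = n-1$ yields the estimate. I therefore assume from now on that $\min_M \kappa_n < -K_0$, so that $M$ fails to be semi-convex.

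\textbf{Step 2 (Maximum principle setup).} I would apply the maximum principle to a test function of the form $W = \log h_{11} + \phi(u)$, where $u = \langle X,\nu\rangle > 0$ is the support function (positive by star-shapedness) and $\phi$ is a function to be tuned (e.g.\ $\phi(u) = -N\log u$ with $N$ large). At an interior maximum $X_0$, choose a local orthonormal frame diagonalizing $h_{ij}$ at $X_0$ with eigenvalues $\lambda_1 \geq \cdots \geq \lambda_n$, letting $m$ denote the multiplicity of $\lambda_1$. The first-order condition gives $h_{11i} = -h_{11}\phi'(u) u_i$. For the second-order condition, I would compute $F^{ii} W_{ii} \leq 0$ using Lemma \ref{support function} for $u_{ii}$, the commutation formula (\ref{comm}) to convert $h_{11ii}$ into $h_{ii11}$, and the equation differentiated twice in direction $e_1$, namely $F^{ii}h_{ii11} = f_{11} - F^{pq,rs} h_{pq1} h_{rs1}$.

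\textbf{Step 3 (Invoking Lemma \ref{Lemma}).} The problematic term is $-F^{pq,rs}h_{pq1}h_{rs1}$, which splits as $-\sum_{p\neq q}F^{pp,qq}h_{pp1}h_{qq1} + \sum_{p\neq q}F^{pp,qq}h_{pq1}^2$; the off-diagonal piece is non-negative (since $F^{pp,qq} = \sigma_{n-3}(\lambda|pq) > 0$ on $\Gamma_{n-1}$ by Lemma \ref{Sigma_k-lemma}) and should combine with the eigenvalue-perturbation contribution $2\sum_{q>m}F^{ii}h_{1qi}^2/(\lambda_1-\lambda_q)$ that arises from treating $\lambda_1$ as a function of $h$ at a point where it has multiplicity $m$. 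After rotating the frame inside the $\lambda_1$-eigenspace so that $h_{ii1} = 0$ for $1 < i \leq m$, set $\xi_i = h_{ii1}$. In the subcase $\lambda_n(X_0) < -K_0$ I invoke Lemma \ref{Lemma} to bound $-\sum_{p\neq q}F^{pp,qq}\xi_p\xi_q$ from above. Since $\sum_i F^{ii}\xi_i = \partial_1 f$ is bounded and $\xi_1 = h_{111} = -h_{11}\phi'(u)u_1$ by the critical-point condition, plugging everything back into the maximum principle inequality should yield a bound of the form $h_{11}(X_0) \leq C$.

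\textbf{Step 4 (Remaining subcase and main obstacle).} In the subcase $\lambda_n(X_0) \geq -K_0$, Lemma \ref{Lemma} is not available, but all principal curvatures at $X_0$ are bounded below by $-K_0$, so a direct "locally semi-convex" argument mirroring the proof of Theorem \ref{sc thm} should close the estimate at $X_0$. The hardest part will be the Step 3 bookkeeping: verifying that the positive term $2\sum_{i>m}F^{ii}\xi_i^2/(\lambda_1-\lambda_i)$ produced by Lemma \ref{Lemma} exactly absorbs the positive eigenvalue-perturbation contribution, and controlling the mixed third derivatives $h_{pq1}$ ($p,q > 1$, $p \neq q$) which are not directly constrained by the critical-point conditions. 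The precise matching of these weights is what makes Lemma \ref{Lemma} the "right" concavity inequality for $\sigma_{n-1}$.
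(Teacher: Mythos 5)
Your proposal follows essentially the same route as the paper: the dichotomy at the maximum point according to whether $\kappa_n(X_0)\geq -K_0$, the test function built from $\ln\kappa_1-N\ln u$, and the application of Lemma \ref{Lemma} with $\xi_i=h_{ii1}$ are exactly the paper's argument. Two adjustments are needed to close it fully. First, the paper's test function also carries a term $\tfrac{\alpha}{2}|X|^2$; matching Lu's test function exactly is what lets the subcase $\kappa_n(X_0)\geq -K_0$ follow verbatim from the proof of Theorem \ref{sc thm} (which only uses semi-convexity at the maximum point), so dropping that term would force you to rerun the semi-convex argument rather than quote it. Second, the bookkeeping you defer does work out, and is resolved as follows: the Brendle--Choi--Daskalopoulos first-order relation gives $h_{11i}=h_{ii1}=0$ for $1<i\leq m$ directly (no frame rotation is needed, and a rotation argument is delicate precisely at points of higher multiplicity), the extracted pieces $2\sum_{i>m}F^{11,ii}h_{11i}^2/\kappa_1$ and $2\sum_{i>m}F^{ii}h_{1ii}^2/(\kappa_1(\kappa_1-\kappa_i))$ combine with $-\sum_{i>m}F^{ii}h_{11i}^2/\kappa_1^2$ into $\sum_{i>m}F^{ii}(\kappa_1+\kappa_i)h_{11i}^2/(\kappa_1^2(\kappa_1-\kappa_i))\geq 0$ using $\kappa_1+\kappa_i\geq 0$ on $\Gamma_{n-1}$, and what remains is exactly the left-hand side of Lemma \ref{Lemma}. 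Finally, note that $\sum_i F^{ii}h_{ii1}=f_1$ is of size $O(\kappa_1)$ rather than bounded, so the resulting term $(\sum_i F^{ii}h_{ii1})^2/(F\kappa_1)\leq C\kappa_1$ must be absorbed by $(N-1)F^{11}\kappa_1^2\geq c(n)(N-1)\kappa_1$ with $N$ large, which is how the paper concludes $\kappa_1\leq C$.
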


\begin{proof}
Since $M$ is strictly star-shaped, without loss of generality, we may assume 
$u>a>0$. Consider the same test function as \cite{LuSC}
\begin{equation*}
Q=\ln \kappa _{1}-N\ln u+\frac{\alpha}{2}|X|^2 ,
\end{equation*}%
where $\kappa _{1}$ is the largest principle curvature and $N,\alpha $ are
large constants to be determined later. Assume $Q$ achieves maximum at an
interior point $X_{0}$. At $X_{0}$, we can choose an orthonormal frame such
that $(h_{ij})$ is diagonalized. Without loss of generality, we may assume $%
\kappa _{1}$ has multiplicity $m$, i.e. 
\begin{equation*}
\kappa _{1}=\cdots =\kappa _{m}>\kappa _{m+1}\geq \cdots \geq \kappa _{n}.
\end{equation*}

By Lemma 5 in \cite{BCD}, at $X_{0}$, we have 
\begin{equation}
\delta _{kl}\cdot {\kappa _{1}}_{i}=h_{kli},\quad 1\leq k,l\leq m,
\label{BCD}
\end{equation}%
\begin{equation*}
{\kappa _{1}}_{ii}\geq h_{11ii}+2\sum_{p>m}\frac{h_{1pi}^{2}}{\kappa
_{1}-\kappa _{p}},
\end{equation*}%
in the viscosity sense.

In the following, we will do a standard computation at $X_{0}$. The computation is exactly the same as in \cite{LuSC} up to (\ref{ineq}). For the sake of completeness, we include all details here. Readers who are familiar with this computation can jump to (\ref{ineq}).

At $X_0$, we have 
\begin{align}  \label{General-critical}
0=\frac{{\kappa_1}_i}{\kappa_1}-N\frac{u_i}{u}+\alpha\left\langle X,X_i\right\rangle=\frac{h_{11i}}{\kappa_1}-N\frac{u_i}{u}+\alpha\left\langle X,X_i\right\rangle,
\end{align}
\begin{align}  \label{General-1}
0\geq &\ \frac{{\kappa_1}_{ii}}{\kappa_1}-\frac{ (\kappa_1)_i^2}{\kappa_1^2}-N\frac{u_{ii}}{u}+N\frac{u_i^2}{u^2}+\alpha\left(g_{ii}-h_{ii}u\right)\\
\geq &\ \frac{h_{11ii}}{\kappa_1}+2\sum_{p>m}\frac{h_{1pi}^2}{%
\kappa_1(\kappa_1-\kappa_p)}-\frac{ h_{11i}^2}{\kappa_1^2}-N\frac{u_{ii}}{u}+\alpha\left(1-h_{ii}u\right),  \notag
\end{align}
in the viscosity sense.

By (\ref{comm}), we have 
\begin{equation*}
h_{11ii}=h_{ii11}+h_{11}^{2}h_{ii}-h_{ii}^{2}h_{11}.
\end{equation*}

Plugging into (\ref{General-1}), we have 
\begin{align*}
0\geq & \ \frac{h_{ii11}}{\kappa _{1}}+2\sum_{p>m}\frac{h_{1pi}^{2}}{\kappa
_{1}(\kappa _{1}-\kappa _{p})}-\frac{h_{11i}^{2}}{\kappa _{1}^{2}}-N\frac{%
u_{ii}}{u} \\
& \ +\alpha \left(1-h_{ii}u\right) +\kappa _{1}\kappa
_{i}-\kappa _{i}^{2}.
\end{align*}

Contracting with $F^{ii}=\sigma _{n-1}^{ii}$, together with Lemma \ref{Sigma_k-Lemma-0}, we have 
\begin{align}
0\geq & \ \sum_{i}\frac{F^{ii}h_{ii11}}{\kappa _{1}}+2\sum_{i}\sum_{p>m}\frac{F^{ii}h_{1pi}^{2}}{\kappa _{1}(\kappa _{1}-\kappa _{p})}-\sum_{i}\frac{F^{ii}h_{11i}^{2}}{\kappa _{1}^{2}}-N\sum_{i}\frac{F^{ii}u_{ii}}{u}
\label{General-2} \\
& \ +\alpha \sum_{i}F^{ii}-\alpha (n-1)Fu+(n-1)F\kappa_{1}-\sum_{i}F^{ii}\kappa _{i}^{2}  \notag \\
\geq & \ \sum_{i}\frac{F^{ii}h_{ii11}}{\kappa _{1}}+2\sum_{i}\sum_{p>m}\frac{F^{ii}h_{1pi}^{2}}{\kappa _{1}(\kappa _{1}-\kappa _{p})}-\sum_{i}\frac{F^{ii}h_{11i}^{2}}{\kappa _{1}^{2}}-N\sum_{i}\frac{F^{ii}u_{ii}}{u}  \notag
\\
& \ +\alpha\sum_{i}F^{ii}-\sum_{i}F^{ii}\kappa_{i}^{2}-C\alpha ,  \notag
\end{align}%
where $C$ is a universal constant depending only on $n,\Vert X\Vert_{C^{0,1}}$ and $\Vert f\Vert _{L^{\infty}}$. From now on, we will use $C$ to denote a universal constant depending only on $n,\Vert X\Vert_{C^{0,1}},\inf f$ and $\Vert f\Vert _{C^{1,1}}$, it may change from line to line.

By Lemma \ref{support function}, we have 
\begin{align*}
u_{ii}=\sum_kh_{iik}\left\langle X,e_k\right\rangle+h_{ii}-h_{ii}^2u.
\end{align*}

Together with Lemma \ref{Sigma_k-Lemma-0}, we have 
\begin{align*}
-N\sum_i\frac{F^{ii}u_{ii}}{u}=&\ -N\sum_k\frac{F_k\left\langle X,e_k\right\rangle}{u}-N\frac{(n-1)F}{u}+ N\sum_iF^{ii}\kappa_i^2 \\
\geq &\ -N\sum_k\frac{F_k\left\langle X,e_k\right\rangle}{u} + N\sum_iF^{ii}\kappa_i^2-CN.
\end{align*}

Plugging into (\ref{General-2}), we have 
\begin{align}
0\geq & \ \sum_{i}\frac{F^{ii}h_{ii11}}{\kappa _{1}}+2\sum_{i}\sum_{p>m}\frac{F^{ii}h_{1pi}^{2}}{\kappa _{1}(\kappa _{1}-\kappa _{p})}-\sum_{i}\frac{F^{ii}h_{11i}^{2}}{\kappa _{1}^{2}}-N\sum_{k}\frac{F_{k}\left\langle X,e_k\right\rangle}{u}
\label{General-3} \\
& \ +\alpha \sum_{i}F^{ii}+(N-1)\sum_{i}F^{ii}\kappa
_{i}^{2}-C\alpha -CN.  \notag
\end{align}

Differentiating equation (\ref{Equation-Sigma_k}) twice, we have 
\begin{align*}
&\ \sum_iF^{ii}h_{ii11}+\sum_{p,q,r,s}F^{pq,rs}h_{pq1}h_{rs1}=f_{11} \\
=&\ \left(\sum_kh_{k1}(d_\nu f)(e_k)+ (d_Xf)(X_1)\right)_1 \\
=&\ \sum_kh_{k11}(d_\nu f)(e_k)+h_{11}^2(d_{\nu\nu}
f)(e_1,e_1)+h_{11}(d_{\nu X} f)(e_1,X_1)-h_{11}^2(d_\nu f)(\nu) \\
&\ +(d_{XX}f)(X_1,X_1)+h_{11}(d_{X\nu}f)(X_1,e_1)-h_{11}(d_Xf)(\nu) \\
\geq &\ \sum_k h_{k11}(d_\nu f)(e_k)-C\kappa_1^2-C\kappa_1-C.
\end{align*}

Plugging into (\ref{General-3}), we have 
\begin{align}
0\geq & \ -\sum_{p,q,r,s}\frac{F^{pq,rs}h_{pq1}h_{rs1}}{\kappa _{1}}%
+2\sum_{i}\sum_{p>m}\frac{F^{ii}h_{1pi}^{2}}{\kappa _{1}(\kappa _{1}-\kappa
_{p})}-\sum_{i}\frac{F^{ii}h_{11i}^{2}}{\kappa _{1}^{2}}  \label{General-4}
\\
& \ -N\sum_{k}\frac{F_{k}\left\langle X,e_k\right\rangle}{u}+\alpha\sum_{i}F^{ii}+(N-1)\sum_{i}F^{ii}\kappa _{i}^{2}  \notag \\
& \ +\sum_{k}\frac{h_{11k}}{\kappa _{1}}(d_{\nu }f)(e_{k})-C\kappa
_{1}-C\alpha -CN.  \notag
\end{align}

By Lemma \ref{support function} and the critical equation (\ref%
{General-critical}), we have 
\begin{align*}
&\ -N\sum_k\frac{F_k\left\langle X,e_k\right\rangle}{u}+\sum_k\frac{h_{11k}}{\kappa_1}(d_\nu f)(e_k)
\\
=&\ -N\sum_k\bigg(h_{kk}(d_\nu f)(e_k)+ (d_Xf)(X_k)\bigg)\frac{\left\langle X,e_k\right\rangle}{u}+\sum_k\left( N\frac{u_k}{u}-\alpha\left\langle X,e_k\right\rangle\right) (d_\nu f)(e_k) \\
\geq &\ -N \sum_kh_{kk}(d_\nu f)(e_k)\frac{\left\langle X,e_k\right\rangle}{u}+N\sum_k\frac{h_{kk}\left\langle X,e_k\right\rangle}{u}(d_\nu f)(e_k)-CN-C\alpha \\
= &\ -CN-C\alpha.
\end{align*}

Plugging into (\ref{General-4}), we have 
\begin{align*}
0\geq & \ -\sum_{p,q,r,s}\frac{F^{pq,rs}h_{pq1}h_{rs1}}{\kappa _{1}}+2\sum_{i}\sum_{p>m}\frac{F^{ii}h_{1pi}^{2}}{\kappa _{1}(\kappa _{1}-\kappa_{p})}-\sum_{i}\frac{F^{ii}h_{11i}^{2}}{\kappa _{1}^{2}} \\
& \ +\alpha \sum_{i}F^{ii}+(N-1)\sum_{i}F^{ii}\kappa_{i}^{2}-C\kappa _{1}-C\alpha -CN.
\end{align*}

Now 
\begin{align*}
-\sum_{p,q,r,s} F^{pq,rs}h_{pq1}h_{rs1}=-\sum_{p\neq q}F^{pp,qq}h_{pp1}h_{qq1}+\sum_{p\neq q}F^{pp,qq}h_{pq1}^2.
\end{align*}

Thus 
\begin{align}
0\geq & \ -\sum_{p\neq q}\frac{F^{pp,qq}h_{pp1}h_{qq1}}{\kappa _{1}}+\sum_{p\neq q}\frac{F^{pp,qq}h_{pq1}^{2}}{\kappa _{1}}+2\sum_{i}\sum_{p>m}\frac{F^{ii}h_{1pi}^{2}}{\kappa _{1}(\kappa _{1}-\kappa _{p})}
\label{General-5} \\
& \ -\sum_{i}\frac{F^{ii}h_{11i}^{2}}{\kappa _{1}^{2}}+\alpha \sum_{i}F^{ii}+(N-1)\sum_{i}F^{ii}\kappa _{i}^{2}  \notag \\
& \ -C\kappa _{1}-C\alpha -CN.  \notag
\end{align}

By Lemma \ref{Sigma_k-Lemma-0}, we have
\begin{align}\label{General-6}
\sum_{p\neq q} \frac{F^{pp,qq}h_{pq1}^2}{\kappa_1}\geq 2\sum_{i>m}\frac{F^{11,ii}h_{11i}^2}{\kappa_1}= 2\sum_{i>m}\frac{(F^{ii}-F^{11})h_{11i}^2}{\kappa_1(\kappa_1-\kappa_i)}.
\end{align}

On the other hand
\begin{align}\label{General-7}
2\sum_i\sum_{p>m}\frac{F^{ii}h_{1pi}^2}{\kappa_1(\kappa_1-\kappa_p)}\geq&\ 2\sum_{p>m}\frac{F^{pp}h_{1pp}^2}{\kappa_1(\kappa_1-\kappa_p)}+2\sum_{p>m}\frac{F^{11}h_{1p1}^2}{\kappa_1(\kappa_1-\kappa_p)}.
\end{align}

Combining (\ref{General-6}) and (\ref{General-7}), we have
\begin{align}\label{General-8}
\sum_{p\neq q} \frac{F^{pp,qq}h_{pq1}^2}{\kappa_1}+2\sum_i\sum_{p>m}\frac{F^{ii}h_{1pi}^2}{\kappa_1(\kappa_1-\kappa_p)}\geq 2\sum_{i>m}\frac{F^{ii}h_{ii1}^2}{\kappa_1(\kappa_1-\kappa_i)}+2\sum_{i>m}\frac{F^{ii}h_{11i}^2}{\kappa_1(\kappa_1-\kappa_i)}.
\end{align}

By (\ref{BCD}), we have 
\begin{align}  \label{General-9}
h_{11i}=h_{1i1}=\delta_{1i}\cdot {\kappa_1}_1=0,\quad \forall 1<i\leq m.
\end{align}

Plugging (\ref{General-8}) and (\ref{General-9}) into (\ref{General-5}), we have
\begin{align*}
0\geq &\ -\sum_{p\neq q}\frac{F^{pp,qq}h_{pp1}h_{qq1}}{\kappa_1}+2\sum_{i>m}\frac{F^{ii}h_{ii1}^2}{\kappa_1(\kappa_1-\kappa_i)}+\sum_{i>m}\frac{F^{ii}(\kappa_1+\kappa_i)h_{11i}^2}{\kappa_1^2(\kappa_1-\kappa_i)}-\frac{ F^{11}h_{111}^2}{\kappa_1^2}\\
&\ +\alpha \sum_iF^{ii}+(N-1)\sum_iF^{ii}\kappa_i^2-C\kappa_1-C\alpha-CN.
\end{align*}

By Lemma \ref{Sigma_k-lemma}, $\lambda_1+\lambda_i\geq 0$ for all $1<i\leq n$. Consequently,
\begin{align}\label{ineq}
0\geq&\ -\sum_{p\neq q}\frac{F^{pp,qq}h_{pp1}h_{qq1}}{\kappa_1}+2\sum_{i>m}\frac{F^{ii}h_{ii1}^2}{\kappa_1(\kappa_1-\kappa_i)}-\frac{ F^{11}h_{111}^2}{\kappa_1^2}\\\nonumber
&\ +\alpha \sum_iF^{ii}+(N-1)\sum_iF^{ii}\kappa_i^2-C\kappa_1-C\alpha-CN.
\end{align}

Let $K_{0}$ be the constant in Lemma \ref{Lemma}. We now separate into two cases. 

\medskip

{\bf Case i:} At $X_{0}$, $\kappa _{n}\geq -K_{0}$.

In this case, the hypersurface will become semi-convex at $X_{0}$. Since we use the same test function as Lu \cite[Theorem 4.1]{LuSC},
Theorem \ref{theoremC2} will hold automatically by Theorem \ref{sc thm}. We
remark that the proof for Theorem \ref{sc thm} only requires the
hypersurface to be semi-convex at the maximum point of the test function.

\medskip

{\bf Case ii:}  At $X_{0}$, $\kappa _{n}<-K_{0}$.

In this case, we first note that by (\ref{BCD}),
\begin{align*}
h_{ii1}=h_{i1i}=\delta_{i1}\cdot {\kappa_1}_i=0,\quad \forall 1<i\leq m.
\end{align*}

Apply Lemma \ref{Lemma} to equation (\ref{ineq}), we obtain 
\begin{align*}
0 \geq &\ -\frac{\left(\sum_i F^{ii}h_{ii1}\right) ^{2}}{F\kappa _{1}}+\alpha \sum_{i}F^{ii}+(N-1)\sum_{i}F^{ii}\kappa _{i}^{2}\ -C\kappa
_{1}-C\alpha -CN \\
\geq &\ (N-1)\sum_{i}F^{ii}\kappa_{i}^{2}\ -C\kappa _{1}-C\alpha -CN,
\end{align*}
where we have used the equation $\sum_i F^{ii}h_{ii1}= h_{11}(d_\nu f)(e_1)+ (d_Xf)(X_1)$ in the second line.

By Lemma \ref{Sigma_k-lemma}, we have $F^{11}\kappa _{1}^{2}\geq c(n)\kappa _{1}$. It follows that
\begin{align*}
0\geq \left(c(n)N-C\right)\kappa_1-C\alpha-CN.
\end{align*}

By choosing $N$ sufficiently large, we conclude that $\kappa_1\leq C$.

We remark that in case (i), $\alpha $ and $N$ also have a lower bound. The
large constants we fix have to satisfy both case (i) and case (ii).
\end{proof}

\end{document}